\topskip \setlength{\parindent}{0pt} \setlength{\parskip}{5pt plus
\numberwithin{equation}{section}
\newtheorem{theorem}{Theorem}[section]
\newtheorem{corollary}[theorem]{Corollary}
\newtheorem{definition}[theorem]{Definition}
\newtheorem{lemma}[theorem]{Lemma}
\begin{document}

\pagenumbering{arabic}
\pagestyle{headings}
\def\sof{\hfill\rule{2mm}{2mm}}
\def\llim{\lim_{n\rightarrow\infty}}
\def\da{\text{-}}

\title{A monotonicity property for generalized Fibonacci sequences}
\maketitle

\begin{center}

\author{Toufik Mansour\\
\small Department of Mathematics, University of Haifa, 31905 Haifa, Israel\\[-0.8ex]
\small\texttt{tmansour@univ.haifa.ac.il}\\[1.8ex]
Mark Shattuck\\
\small Department of Mathematics, University of Tennessee, Knoxville, TN 37996\\[-0.8ex]
\small\texttt{shattuck@math.utk.edu}\\[1.8ex]}
\end{center}

\begin{abstract}
Given $k \geq 2$, let $a_n$ be the sequence defined by the recurrence $a_n=\alpha_1a_{n-1}+\cdots+\alpha_ka_{n-k}$ for $n \geq k$, with initial values $a_0=a_1=\cdots=a_{k-2}=0$ and $a_{k-1}=1$.  We show under a couple of assumptions concerning the constants $\alpha_i$ that the ratio $\frac{\sqrt[n]{a_n}}{\sqrt[n-1]{a_{n-1}}}$ is strictly decreasing for all $n \geq N$, for some $N$ depending on the sequence, and has limit $1$. In particular, this holds in the cases when all of the $\alpha_i$ are unity or when all of the $\alpha_i$ are zero except for the first and last, which are unity.  Furthermore, when $k=3$ or $k=4$, it is shown that one may take $N$ to be an integer less than $12$ in each of these cases.
\end{abstract}

\noindent{Keywords}: monotonicity, log-concavity, $k$-Fibonacci numbers, tribonacci numbers

\noindent{2010 Mathematics Subject Classification}: 05A10, 11B39, 11B75



\section{Introduction}

In 1982, Firoozbakht conjectured that the sequence $\{\sqrt[n]{p_n}\}_{n\geq1}$ is strictly decreasing, where $p_n$ denotes the $n$-th prime.  A stronger conjecture was later made by Sun \cite{Sun} that in fact
$$\frac{\sqrt[n+1]{p_{n+1}}}{\sqrt[n]{p_n}}<1-\frac{\log\log n}{2n^2}, \qquad n>4,$$
which has been verified for all $n \leq 3.5 \cdot 10^6$.  Inspired by this and \cite{Sun0}, Sun posed several conjectures in \cite{Sun} concerning the monotonicity of sequences of the form $\{\sqrt[n]{y_n}\}_{n \geq N}$, where $\{y_n\}_{n\geq0}$ is a familiar number theoretic or combinatorial sequence.  Partial progress has been made in this direction, including Chen et al. \cite{CGW} for Bernoulli numbers, Hou et al. \cite{HSW} for Fibonacci and derangement numbers, and Wang and Zhu \cite{WZ} for Motzkin and (large) Schr\"{o}der numbers.

Recall that a sequence $\{y_n\}_{n\geq0}$ is said to be (\emph{strictly}) \emph{log concave} (see, e.g., \cite{Br, St}) if the sequence of ratios $\{\frac{y_n}{y_{n-1}}\}_{n\geq 1}$ is (strictly) decreasing.  If the sequence of ratios is increasing, then $y_n$ is said to be \emph{log convex} (see \cite{LW}).  Suppose $A>0$ and $B\neq0$ are integers such that $A^2-4B>0$.  Let $u_n$ denote the sequence defined by the second order recurrence $u_n=Au_{n-1}-Bu_{n-2}$ if $n \geq 2$, with initial values $u_0=0$ and $u_1=1$.  In \cite[Theorem 1.1]{HSW}, it was shown that $\sqrt[n]{u_n}$ is strictly log-concave for all $n \geq N$, for some $N$ depending on the sequence, and has limit $1$.  In the special case $A=1$ and $B=-1$, which corresponds to the Fibonacci sequence, it is shown that one may take $N=5$.  Here, we consider the question of monotonicity of $\frac{\sqrt[n]{a_n}}{\sqrt[n-1]{a_{n-1}}}$ for a class of sequences $a_n$ defined by a more general linear recurrence.

Given $k \geq 2$, let $a_n$ be a sequence of non-negative real numbers defined by the recurrence
\begin{equation}\label{eqa0}
a_n=\alpha_1a_{n-1}+\alpha_2a_{n-2}+\cdots+\alpha_ka_{n-k}, \qquad n \geq k,
\end{equation}
with $a_0=a_1=\cdots=a_{k-2}=0$ and $a_{k-1}=1$.  One combinatorial interpretation for $a_n$, which follows from \cite[Section 3.1]{BQ}, is that it counts the weighted linear tilings of length $n-k+1$ in which the tiles have length at most $k$, where a tile of length $i$ is assigned the weight $\alpha_i$.  It will be shown that the sequence $\{\sqrt[n]{a_n}\}$ is strictly log-concave for all $n$ sufficiently large under a couple of assumptions concerning the constants $\alpha_i$ (see Theorem \ref{th1} below).  As a special case, one obtains the log-concavity result mentioned in the previous paragraph for the second-order sequence $u_n$.

We now recall two well-known classes of recurrences.  Letting $\alpha_1=\alpha_2=\cdots=\alpha_k=1$ in \eqref{eqa0}, one gets the $k$-\emph{Fibonacci} sequence, which we will denote here by $f_n^{(k)}$.  The sequence $f_n^{(k)}$ was first considered by Knuth \cite{Kn} and has been given interpretations in terms of linear tilings \cite[Chapter 3]{BQ} and $k$-filtering linear partitions \cite{Mu}.  When $\alpha_1=\alpha_k=1$ and all other $\alpha_i$ are zero,  one gets a class of sequences known as the $k$-\emph{bonacci} numbers (see, e.g., \cite[Section 3.4]{BQ}), which we will denote by $g_n^{(k)}$.  Note that both $f_n^{(k)}$ and $g_n^{(k)}$ reduce to the usual Fibonacci numbers when $k=2$.  It will be shown that the ratio $\frac{\sqrt[n]{a_n}}{\sqrt[n-1]{a_{n-1}}}$ is decreasing for all $n\geq N$ for some $N$ depending on $k$ whenever $a_n=f_n^{(k)}$ or $g_n^{(k)}$.

In the third section, we consider the special cases of $f_n^{(k)}$ and $g_n^{(k)}$ when $k=3$ and $k=4$ and show that one may take $N$ to be an integer less than $12$ in each of these cases.  Our method will apply to finding the best possible $N$ for any \emph{given} sequence $a_n$ satisfying a recurrence of the form \eqref{eqa0} for which  $\sqrt[n]{a_n}$ is eventually log-concave.

\section{Main results}

Given $k \geq 2$, let $a_n$ be a sequence of non-negative real numbers defined by the recurrence
\begin{equation}\label{eq0}
a_n=\alpha_1a_{n-1}+\alpha_2a_{n-2}+\cdots+\alpha_ka_{n-k}, \qquad n \geq k,
\end{equation}
with $a_0=a_1=\cdots=a_{k-2}=0$ and $a_{k-1}=1$, where the $\alpha_i$ are fixed real numbers and $\alpha_k \neq 0$.
The characteristic equation associated with the sequence $a_n$ is defined by
\begin{equation}\label{eq1}
f(x):=x^k-\alpha_1x^{k-1}-\alpha_2x^{k-2}-\cdots-\alpha_{k}=0.
\end{equation}

Let $\lambda_1,\lambda_2,\ldots,\lambda_k$ denote the roots of \eqref{eq1}.  By \cite[Lemma 5.2]{MS}, we have
\begin{equation}\label{eq2}
a_n=c_1\lambda_1^n+c_2\lambda_2^n+\cdots+c_k\lambda_k^n, \qquad n \geq 0,
\end{equation}
where
$$c_i=\frac{1}{\prod_{j=1,j\neq i}^k(\lambda_i-\lambda_j)}, \qquad 1 \leq i \leq k,$$
whenever the $\lambda_i$ are distinct.
Upon writing
$$f(x)=(x-\lambda_1)(x-\lambda_2)\cdots (x-\lambda_k),$$
we have by the product rule of differentiation that
$$f'(\lambda_i)=\prod_{j=1,j\neq i}^k(\lambda_i-\lambda_j)=\frac{1}{c_i}, \qquad 1 \leq i \leq k.$$

\begin{definition}
A zero of a polynomial g will be called dominant if it is simple and is strictly greater in modulus than all of its other zeros.
\end{definition}

Note that if $g$ has real coefficients, then a dominant zero must be real since non-real zeros come in conjugate pairs.

\begin{lemma}\label{l0}
If $f(x)$ defined by \eqref{eq1} has a dominant zero $\lambda$, then $\lambda>0$ and $f'(\lambda)>0$.
\end{lemma}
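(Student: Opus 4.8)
The plan is to transfer information from the polynomial $f$ to the sequence $a_n$, exploit the hypothesis $a_n \ge 0$ to fix the sign of the dominant root $\lambda$, and then read off the sign of $f'(\lambda)$ directly from the factorization of $f$. Two structural observations come first. Since $f$ has real coefficients its non-real zeros occur in conjugate pairs of equal modulus, so a zero strictly largest in modulus cannot be non-real; hence $\lambda \in \mathbb{R}$, as already noted in the text. Also $\lambda \ne 0$, since $f(0) = -\alpha_k \ne 0$. Thus $\lambda$ is a nonzero real number and every other zero $\lambda_j$ of $f$ satisfies $|\lambda_j| < |\lambda|$.

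Next I would pin down the asymptotics of $a_n$ in the form $a_n = c\lambda^n + E_n$, where $c \ne 0$ and $E_n$ has a strictly smaller exponential growth rate than $\lambda^n$. When the zeros of $f$ are distinct this is immediate from \eqref{eq2}: dividing by $\lambda^n$ gives $a_n/\lambda^n = c_{i_0} + \sum_{j \ne i_0} c_j(\lambda_j/\lambda)^n \to c_{i_0} = 1/f'(\lambda)$, which is nonzero because the $\lambda_j$ are distinct. In general, when only $\lambda$ is assumed simple, I would argue via the generating function $\sum_{n\ge 0} a_nx^n = x^{k-1}/(1-\alpha_1x-\cdots-\alpha_kx^k)$ (immediate from \eqref{eq0} and the initial conditions, or from the tiling interpretation): its denominator equals $x^k f(1/x)$, hence vanishes exactly at the reciprocals $1/\lambda_j$, and $1/\lambda$ is its unique zero of smallest modulus and is simple; a short residue computation then yields $c = 1/f'(\lambda) \ne 0$, with $E_n$ decaying at the rate set by the next zero. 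Granting this, suppose $\lambda < 0$. Then $a_n = c(-1)^n|\lambda|^n(1 + o(1))$, so $a_n$ is strictly negative for all large $n$ of one fixed parity, contradicting $a_n \ge 0$. Therefore $\lambda > 0$.

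Finally, with $\lambda > 0$ in hand, write $f(x) = (x-\lambda)\prod_{\lambda_j \ne \lambda}(x-\lambda_j)$, the product running over the remaining zeros counted with multiplicity, and differentiate at $x = \lambda$ to obtain $f'(\lambda) = \prod_{\lambda_j \ne \lambda}(\lambda - \lambda_j)$. Since $\lambda$ is real, deleting it leaves the conjugation symmetry of the zero set intact, so the non-real remaining zeros split into conjugate pairs; each such pair $z, \bar z$ contributes $(\lambda - z)(\lambda - \bar z) = |\lambda - z|^2 > 0$, while each real remaining zero $\lambda_j$ contributes $\lambda - \lambda_j \ge \lambda - |\lambda_j| > 0$ because $|\lambda_j| < \lambda$. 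Hence $f'(\lambda)$ is a product of positive reals, so $f'(\lambda) > 0$, and the proof is complete.

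The one place with genuine work is the middle step: justifying $a_n \sim c\lambda^n$ with $c \ne 0$ without assuming the zeros of $f$ distinct, since the quoted identity \eqref{eq2} only covers that case. This can be handled either by the general solution theory for linear recurrences together with a brief computation of the leading coefficient, or by the generating-function and partial-fraction argument indicated above; everything else reduces to elementary sign bookkeeping.
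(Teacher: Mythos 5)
Your proof is correct, and it diverges from the paper's in an instructive way. The paper proves both conclusions from the single asymptotic identity $a_n=c_1\lambda_1^n(1+e_n)$ with $e_n\to0$: taking $n$ large and even forces $c_1>0$ (hence $f'(\lambda)=1/c_1>0$), and then taking $n$ large and odd forces $\lambda>0$; both steps lean on the hypothesis $a_n\ge0$. You use the same asymptotic-plus-parity mechanism, but only to get $\lambda>0$ (needing just $c\neq0$, not its sign), and you then obtain $f'(\lambda)>0$ by a purely algebraic argument: writing $f'(\lambda)=\prod_{j}(\lambda-\lambda_j)$, pairing the non-real zeros into conjugate factors $|\lambda-z|^2>0$, and bounding the real ones by $\lambda-\lambda_j\ge\lambda-|\lambda_j|>0$ via dominance. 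This second half is genuinely different and arguably cleaner --- it shows that $f'(\lambda)>0$ is a consequence of $\lambda$ being a positive dominant zero alone, with no reference to the sequence --- whereas the paper's version extracts it from the sign of $a_n$. You also address a point the paper glosses over: the representation \eqref{eq2} is stated only for distinct $\lambda_i$, and your generating-function/residue route supplies $a_n=c\lambda^n+E_n$ with $c=1/f'(\lambda)\neq0$ in general, using only that the dominant zero is simple. The cost is that your middle step requires that extra justification (which you correctly flag and sketch); the paper's proof is shorter because it implicitly works in the distinct-root case throughout.
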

\begin{proof}
Suppose $\lambda=\lambda_1$.  Define
\begin{equation}\label{l0e1}
e_n=\frac{\sum_{i=2}^kc_i\lambda_i^n}{c_1\lambda_1^n}, \qquad n \geq 0.
\end{equation}
Note that $a_n=c_1\lambda_1^n(1+e_n)$, by \eqref{eq2}.  Thus $\lambda_1$ and $c_1=\frac{1}{f'(\lambda_1)}$ real implies $e_n$ is real.  Note further that $e_n\rightarrow 0$ as $n \rightarrow \infty$ since $\lambda_1$ is dominant.  Taking $n$ to be large and even implies $c_1>0$ and thus $f'(\lambda_1)=\frac{1}{c_1}>0$. Taking $n$ to be large and odd then implies $\lambda_1$ is positive.
\end{proof}

The following limit holds for the numbers $e_n$.

\begin{lemma}\label{l1}
Suppose that the polynomial $f(x)$ defined by \eqref{eq1} has dominant zero $\lambda$.  Then we have
\begin{equation}\label{l1e1}
\lim_{n \rightarrow \infty}(1+e_n)^{p(n)}=1,
\end{equation}
for any polynomial $p(n)$.
\end{lemma}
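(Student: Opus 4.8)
The plan is to reduce the statement to a single basic fact about exponential versus polynomial growth. Since $\lambda=\lambda_1$ is dominant, each of the other roots $\lambda_i$ ($2 \le i \le k$) satisfies $|\lambda_i| < |\lambda_1|$, so the ratio $r_i := |\lambda_i|/|\lambda_1|$ lies in $[0,1)$. From the definition \eqref{l0e1}, $|e_n| \le \sum_{i=2}^k |c_i|\, r_i^{\,n} \le C \rho^{\,n}$ where $\rho := \max_{2 \le i \le k} r_i < 1$ and $C := \sum_{i=2}^k |c_i|$. Thus $e_n \to 0$ geometrically; the point is to upgrade this to the claim that $(1+e_n)^{p(n)} \to 1$ even after raising to a polynomial power.

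First I would take logarithms: it suffices to show $p(n)\log(1+e_n) \to 0$. Since $e_n \to 0$, for $n$ large we have $|e_n| < 1/2$, and then $|\log(1+e_n)| \le 2|e_n| \le 2C\rho^{\,n}$ by the standard estimate $|\log(1+x)| \le 2|x|$ for $|x|\le 1/2$. Hence $|p(n)\log(1+e_n)| \le 2C\,|p(n)|\,\rho^{\,n}$. Writing $d = \deg p$, we have $|p(n)| = O(n^d)$, so the right-hand side is $O(n^d \rho^{\,n})$, which tends to $0$ as $n \to \infty$ because $\rho < 1$ (a polynomial times a geometric sequence of ratio less than one vanishes in the limit — this is the one elementary fact the argument rests on, provable by the ratio test or by comparison with $\rho^{n/2}$). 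Exponentiating back, $(1+e_n)^{p(n)} = \exp\bigl(p(n)\log(1+e_n)\bigr) \to e^0 = 1$.

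One technical point to attend to: for the logarithm to be defined we need $1 + e_n > 0$, and indeed $a_n = c_1\lambda_1^n(1+e_n)$ with $a_n \ge 0$, $c_1 = 1/f'(\lambda_1) > 0$, and $\lambda_1 > 0$ (both by Lemma \ref{l0}), so $1 + e_n \ge 0$; combined with $e_n \to 0$ this gives $1 + e_n > 0$ for all $n$ sufficiently large, which is all that is needed for a statement about the limit. (If one prefers to avoid even this, one can bound $|(1+e_n)^{p(n)} - 1|$ directly: for $|e_n|$ small and $m := \lceil |p(n)| \rceil$, $|(1+e_n)^{p(n)} - 1|$ is controlled by $m|e_n|(1+|e_n|)^{m}$, which is again $O(n^d \rho^n) \to 0$.) I do not expect any real obstacle here; the only thing to be careful about is uniformity — the estimates $|e_n| < 1/2$ and $1 + e_n > 0$ hold only eventually, but since the conclusion is itself a statement about $n \to \infty$, this causes no difficulty.
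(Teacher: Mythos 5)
Your proposal is correct and follows essentially the same route as the paper: bound $|e_n|$ by a constant times $\rho^n$ with $\rho<1$, take logarithms, and reduce to the fact that a polynomial times a geometric sequence of ratio less than one tends to zero. The only cosmetic differences are that the paper squeezes $(1+e_n)^{p(n)}$ between $(1-|e_n|)^{p(n)}$ and $(1+|e_n|)^{p(n)}$ and evaluates the resulting limits via l'H\^{o}pital's rule, while you use the elementary bound $|\log(1+x)|\le 2|x|$ directly (and you are somewhat more careful about the positivity of $1+e_n$, which the paper only remarks on parenthetically).
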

\begin{proof}
We provide a proof only in the case when the $\lambda_i$ are distinct, the proof in the case when some of the $\lambda_i$ are repeated being similar.
We will show
\begin{equation}\label{l1e2}
\lim_{n \rightarrow \infty}(1+|e_n|)^{p(n)}=\lim_{n \rightarrow \infty}(1-|e_n|)^{p(n)}=1,
\end{equation}
from which \eqref{l1e1} follows.  (Note that $1-|e_n|$ is positive for $n$ sufficiently large, which implies that the expression $(1-|e_n|)^{p(n)}$ is real for all such $n$.)  Let
$$r=\frac{\max\{|\lambda_2|,|\lambda_3|,\ldots,|\lambda_k|\}}{\lambda_1}$$
and
$$M=\frac{\max\{|c_2|,|c_3|,\ldots,|c_k|\}}{c_1}.$$

Note that $$|e_n|\leq (k-1)Mr^{n}, \qquad n \geq 0,$$
so to show \eqref{l1e2}, we only need to show
\begin{equation}\label{l1e3}
\lim_{n \rightarrow \infty}(1+cr^n)^{p(n)}=\lim_{n \rightarrow \infty}(1-cr^n)^{p(n)}=1,
\end{equation}
for constants $c>0$ and $0 < r < 1$.  The limits in \eqref{l1e3} can be evaluated by taking a logarithm and applying l'H\^{o}pital's rule, which completes the proof.
\end{proof}

\begin{theorem}\label{th1} Suppose that the characteristic polynomial $f(x)$ associated with the sequence $a_n$ has dominant zero $\lambda$ such that $f'(\lambda)>1$. Then the sequence of ratios $\frac{\sqrt[n]{a_{n}}}{\sqrt[n-1]{a_{n-1}}}$ is strictly decreasing for all $n \geq N$, for some $N$ depending on the $\alpha_i$, and has limit $1$. \end{theorem}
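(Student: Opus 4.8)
The plan is to analyze the asymptotics of $\log\bigl(\frac{\sqrt[n]{a_n}}{\sqrt[n-1]{a_{n-1}}}\bigr)$ using the representation $a_n = c_1\lambda^n(1+e_n)$ from the proof of Lemma \ref{l0}, where $\lambda = \lambda_1$ is the dominant zero and $c_1 = 1/f'(\lambda) > 0$. First I would write
$$\frac{1}{n}\log a_n = \log\lambda + \frac{\log c_1}{n} + \frac{\log(1+e_n)}{n},$$
so that, setting $b_n := \frac{1}{n}\log a_n$, the quantity whose sign we must control is $b_n - b_{n-1}$, since $\frac{\sqrt[n]{a_n}}{\sqrt[n-1]{a_{n-1}}} = e^{b_n - b_{n-1}}$ and this exceeds or falls below $1$ according to the sign of $b_n - b_{n-1}$. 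The limit statement is then immediate: $b_n \to \log\lambda$, hence $b_n - b_{n-1} \to 0$ and the ratio tends to $1$.

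The substance is showing $b_n - b_{n-1}$ is eventually strictly decreasing, i.e. that the second difference $b_{n+1} - 2b_n + b_{n-1}$ is eventually strictly negative. Using the decomposition above, the $\log\lambda$ terms cancel in any second difference, and one is left with the second difference of $\frac{\log c_1}{n}$ plus the second difference of $\frac{\log(1+e_n)}{n}$. The function $\frac{\log c_1}{n}$ has second difference asymptotic to $\frac{2\log c_1}{n^3}$ (its continuous analogue $\frac{\log c_1}{x}$ has second derivative $\frac{2\log c_1}{x^3}$), and since $c_1 = 1/f'(\lambda)$ with $f'(\lambda) > 1$ by hypothesis, we have $\log c_1 < 0$, so this term contributes something negative of order $n^{-3}$. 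The hypothesis $f'(\lambda) > 1$ is exactly what makes the dominant term negative; this is the crux of why the assumption appears. The remaining task is to show the $e_n$ contribution is negligible compared to $n^{-3}$: since $|e_n| \le (k-1)Mr^n$ for some $0 < r < 1$ by the proof of Lemma \ref{l1}, the term $\frac{\log(1+e_n)}{n}$ and all its finite differences decay exponentially, hence are $o(n^{-3})$. I would make this precise by bounding $\bigl|\log(1+e_{n+1}) - 2\log(1+e_n) + \log(1+e_{n-1})\bigr|$ and the cross terms arising from the $\frac1n$ factors (which involve differences like $\frac1n - \frac1{n+1} = O(n^{-2})$ multiplied against $\log(1+e_n) = O(r^n)$), concluding each piece is $O(n^{-1}r^{n-1})$ or smaller.

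Putting these together: for $n$ large enough, $b_{n+1} - 2b_n + b_{n-1} = \frac{2\log c_1}{n^3} + O(n^{-4}) + O(r^{n-1}) < 0$, which gives the strict monotonicity for all $n \ge N$ with $N$ depending on the $\alpha_i$ (through $\lambda$, $c_1$, $r$, $M$, $k$). I expect the main obstacle to be organizing the finite-difference estimates cleanly rather than any deep difficulty: one must handle the three sources of second-difference contributions (the $\frac{\log c_1}{n}$ term, the mixed terms pairing $\frac1n$-differences with $\log(1+e_n)$, and the pure $\log(1+e_n)$ second difference) and verify that only the first survives at order $n^{-3}$ with the correct negative sign. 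A cleaner alternative, which I would also consider, is to pass to the continuous function $h(x) = \frac{\log c_1}{x} + \frac{\log(1+e(x))}{x}$ where $e(x)$ is the natural interpolation $e(x) = \sum_{i\ge2}\frac{c_i}{c_1}(\lambda_i/\lambda_1)^x$ (noting $\lambda_i/\lambda_1$ may be complex, so one takes real parts appropriately, or bounds via $|e(x)| \le (k-1)Mr^x$), show $h''(x) < 0$ for large $x$ by the same order-of-magnitude argument, and then relate the second difference of $b_n$ to $h''$ at an intermediate point via the mean value theorem; but the discrete estimate is self-contained and probably shorter to write.
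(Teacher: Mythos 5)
Your argument is correct and is essentially the paper's own proof in logarithmic form: the paper substitutes $a_n=c_1\lambda_1^n(1+e_n)$ into the ratio inequality and reduces it to $\frac{(1+e_n)^{2(n^2-1)}}{(1+e_{n-1})^{n(n+1)}(1+e_{n+1})^{n(n-1)}}>c_1^2$, whose logarithm (after dividing by $n(n^2-1)$) is exactly your statement that the second difference of $b_n=\frac1n\log a_n$ is negative, with $f'(\lambda)>1\Leftrightarrow c_1<1$ supplying the strictly negative main term and the exponential decay of $e_n$ killing the corrections. Your handling of the limit statement and the restriction to distinct roots (inherited from the bound $|e_n|\leq(k-1)Mr^n$) also match the paper.
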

\begin{proof}
We provide a proof only in the case when the $\lambda_i$ are distinct.
First observe that
$$\frac{\sqrt[n]{a_{n}}}{\sqrt[n-1]{a_{n-1}}}>\frac{\sqrt[n+1]{a_{n+1}}}{\sqrt[n]{a_{n}}}$$
if and only if
$$\left[c_1\lambda_1^n(1+e_n)\right]^{2/n}>\left[c_1\lambda_1^{n+1}(1+e_{n+1})\right]^{1/(n+1)}\left[c_1\lambda_1^{n-1}(1+e_{n-1})\right]^{1/(n-1)},$$
which may be rewritten as
\begin{equation}\label{th1e2}
\frac{(1+e_n)^{2(n^2-1)}}{(1+e_{n-1})^{n(n+1)}(1+e_{n+1})^{n(n-1)}}>c_1^2.
\end{equation}
By Lemma \ref{l1}, we have
$$\lim_{n \rightarrow \infty}(1+e_n)^{2(n^2-1)}=\lim_{n \rightarrow \infty}(1+e_{n-1})^{n(n+1)} =\lim_{n \rightarrow \infty}(1+e_{n+1})^{n(n-1)}=1,$$
which implies \eqref{th1e2} since $c_1=\frac{1}{f'(\lambda_1)}<1$.

For the last statement, note that
$$\log\left(\frac{\sqrt[n+1]{a_{n+1}}}{\sqrt[n]{a_n}}\right)=\frac{1}{n+1}\log a_{n+1}-\frac{1}{n}\log a_n=\frac{\log c_1+\log(1+e_{n+1})}{n+1}-\frac{\log c_1+\log(1+e_{n})}{n},$$
and take limits as $n \rightarrow \infty$.
\end{proof}

\begin{corollary}\label{t1c1}
If $a_n$ is a sequence such that $f(x)$ has a dominant zero $\lambda$ satisfying $f'(\lambda)>1$, then $\sqrt[n]{a_n}$ is strictly increasing for all sufficiently large $n$.
\end{corollary}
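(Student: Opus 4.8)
The plan is to derive the corollary directly from Theorem \ref{th1}, so essentially all of the work is already done. First I would note that $\sqrt[n]{a_n}$ is a well-defined positive real for all sufficiently large $n$: by Lemma \ref{l0} the dominant zero $\lambda=\lambda_1$ is positive and $c_1=1/f'(\lambda_1)$ is positive, so the representation $a_n=c_1\lambda_1^n(1+e_n)$ together with $e_n\to 0$ (valid since $\lambda_1$ is dominant) forces $a_n>0$ once $n$ is large. Replacing the threshold $N$ furnished by Theorem \ref{th1} by a larger one if necessary, we may assume $a_n>0$ for all $n\geq N$.

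Next I would invoke Theorem \ref{th1} under the present hypothesis (that $f$ has a dominant zero $\lambda$ with $f'(\lambda)>1$): the sequence $t_n:=\frac{\sqrt[n]{a_n}}{\sqrt[n-1]{a_{n-1}}}$ is strictly decreasing for $n\geq N$ and $\lim_{n\to\infty}t_n=1$. A strictly decreasing real sequence converging to $1$ has every term of its decreasing range strictly greater than $1$, since for $n\geq N$ one has $t_n>t_{n+1}\geq\lim_{m\to\infty}t_m=1$. Hence $\sqrt[n]{a_n}>\sqrt[n-1]{a_{n-1}}$ for every $n\geq N$, which is exactly the assertion that $\sqrt[n]{a_n}$ is strictly increasing for all sufficiently large $n$. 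Alternatively, one can argue directly in the spirit of the proof of Theorem \ref{th1}: the inequality $\sqrt[n]{a_n}<\sqrt[n+1]{a_{n+1}}$ is equivalent, after raising both sides to the power $n(n+1)$ and cancelling the common factor $\lambda_1^{n(n+1)}$, to $c_1<\frac{(1+e_{n+1})^n}{(1+e_n)^{n+1}}$, whose right-hand side tends to $1$ by Lemma \ref{l1} while $c_1=1/f'(\lambda_1)<1$; I would present the shorter first route.

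There is no substantial obstacle here, since the corollary is really just a repackaging of the two conclusions of Theorem \ref{th1}. The single point worth a moment's care is the implication ``strictly decreasing with limit $1$'' $\Rightarrow$ ``every term exceeds $1$'', which is precisely where the strictness in Theorem \ref{th1}, as opposed to mere weak monotonicity, is being used.
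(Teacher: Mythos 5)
Your proof is correct and matches the paper's (implicit) argument: the corollary is stated without proof precisely because it follows immediately from Theorem \ref{th1} by the observation that a strictly decreasing sequence with limit $1$ has all terms of its tail strictly greater than $1$. Your handling of the one delicate point (using strictness to get $t_n>t_{n+1}\geq 1$) is exactly right.
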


\emph{Remark:} If we allow the sequence $a_n$ to contain negative terms, then modifying slightly the proof of Theorem \ref{th1} yields the result for $|a_n|$.

Let us exclude for now from consideration recurrences of the form
$$a_n=\alpha_da_{n-d}+\alpha_{2d}a_{n-2d}+\cdots+\alpha_ka_{n-k}, \qquad n \geq k,$$
for some divisor $d>1$ of $k$ and subject to the same initial conditions.  Observe that such recurrences may be reduced, upon letting $b_m=a_{dm+d-1}$, to those of the form
$$b_m=\alpha_db_{m-1}+\alpha_{2d}b_{m-2}+\cdots + \alpha_k b_{m-\frac{k}{d}}, \qquad m \geq \frac{k}{d},$$
where $b_0=b_1=\cdots=b_{\frac{k}{d}-2}=0$ and $b_{\frac{k}{d}-1}=1$ (note that $a_{dm+r}=0$ for all $m$ if $0 \leq r < d-1$, by the initial conditions).

We now describe a class of recurrences frequently arising in applications for which the characteristic polynomial has a dominant zero.

\begin{lemma}\label{l2}
Suppose that $\alpha_i \geq 0$ for all $i$ in \eqref{eq0} with $\alpha_k\neq0$ and furthermore that it is not the case that $\alpha_i=0$ for all $i \in [k]-\{d,2d,\ldots,k\}$ for some divisor $d>1$ of $k$. Then  $f(x)$ has a dominant zero.
\end{lemma}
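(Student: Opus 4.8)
The plan is to identify the unique positive real root of $f$ as the dominant zero. Write $S=\{i\in[k]:\alpha_i>0\}$; since $\alpha_k\neq0$ (and $\alpha_k\geq0$) we have $\alpha_k>0$, so $k\in S$. The hypothesis is equivalent to $\gcd(S)=1$: if some divisor $d>1$ of $k$ divided every element of $S$, then $S\subseteq\{d,2d,\ldots,k\}$ and the excluded case would occur; conversely, $d:=\gcd(S)$ divides $k$ because $k\in S$, so $\gcd(S)>1$ would again force the excluded case. Thus we may work under the assumption $\gcd(S)=1$. With this in hand, the argument has three parts: locate the positive root, check it is simple, and check it strictly dominates every other root in modulus.

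For the first two parts I would argue as follows. For $x>0$ put $h(x)=\sum_{i\in S}\alpha_i x^{-i}$, so that $f(x)=x^{k}\bigl(1-h(x)\bigr)$. The function $h$ is continuous and strictly decreasing on $(0,\infty)$ (a nonempty sum of strictly decreasing terms, as $k\in S$), with $\lim_{x\to0^{+}}h(x)=+\infty$ and $\lim_{x\to\infty}h(x)=0$; hence there is a unique $\rho>0$ with $h(\rho)=1$, i.e.\ a unique positive root $\rho$ of $f$. Differentiating $f(x)=x^{k}\bigl(1-h(x)\bigr)$ and using $1-h(\rho)=0$ gives $f'(\rho)=-\rho^{k}h'(\rho)>0$ since $h'(\rho)<0$; in particular $\rho$ is a simple root (in accordance with Lemma~\ref{l0}).

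The remaining task is to show every root $\mu\neq\rho$ satisfies $|\mu|<\rho$, and this is the step that consumes the hypothesis $\gcd(S)=1$. Suppose to the contrary that $|\mu|\geq\rho$. Since $|\mu|\geq\rho>0$ we may divide $\mu^{k}=\sum_{i\in S}\alpha_i\mu^{k-i}$ by $\mu^{k}$ to obtain $\sum_{i\in S}\alpha_i\mu^{-i}=1$, and therefore
\[
1=\Bigl|\sum_{i\in S}\alpha_i\mu^{-i}\Bigr|\le\sum_{i\in S}\alpha_i|\mu|^{-i}=h(|\mu|)\le h(\rho)=1,
\]
the last inequality holding because $h$ is decreasing and $|\mu|\geq\rho$. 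Equality throughout forces, first, $h(|\mu|)=1$, whence $|\mu|=\rho$ by strict monotonicity of $h$; and second, equality in the triangle inequality for the complex numbers $\alpha_i\mu^{-i}$ ($i\in S$), whose sum is the positive real number $1$, so each $\alpha_i\mu^{-i}$ is a non-negative real. As $\alpha_i>0$ and $|\mu^{-i}|=\rho^{-i}\neq0$, this yields $\mu^{-i}=\rho^{-i}$, i.e.\ $(\mu/\rho)^{i}=1$, for every $i\in S$. Writing $1=\sum_{i\in S}n_i i$ with $n_i\in\mathbb{Z}$ — possible since $\gcd(S)=1$ — we get $\mu/\rho=\prod_{i\in S}\bigl((\mu/\rho)^{i}\bigr)^{n_i}=1$, contradicting $\mu\neq\rho$. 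Hence $\rho$ is a dominant zero of $f$.

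I expect the equality case of the triangle inequality, combined with the B\'ezout argument closing off the periodicity possibility, to be the only genuinely delicate point; everything else is elementary calculus on $h$. As an alternative one could invoke Perron--Frobenius theory for the companion matrix of the recurrence, which is non-negative and (because $\alpha_k>0$) irreducible, the hypothesis $\gcd(S)=1$ being exactly primitivity; but the direct argument above is self-contained and keeps the paper free of extra machinery.
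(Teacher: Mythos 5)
Your proof is correct, and while it rests on the same two pillars as the paper's --- the uniqueness of the positive root and a triangle-inequality estimate applied to the recurrence evaluated at a competing root --- it arranges them differently in a way that is arguably cleaner. The paper first proves (by contradiction) that for any other root $\rho$ the terms $\alpha_i\rho^{k-i}$ cannot all be non-negative reals, then feeds that into a \emph{strict} triangle inequality to get $f(|\rho|)<0$ and hence $|\rho|<\lambda$ from the sign pattern of $f$ on the positive axis. You instead assume $|\mu|\geq\rho$, squeeze the chain $1\leq h(|\mu|)\leq h(\rho)=1$ into equality, and analyze the equality case. The real divergence is in how the support hypothesis is consumed: the paper works with the minimal gap $b$ between consecutive indices of the support, deduces that $\mu/\lambda$ is a primitive $b'$-th root of unity, and hunts for a gap not divisible by $b'$; you extract $(\mu/\rho)^i=1$ for \emph{every} $i$ in the support at once and finish with a one-line B\'ezout identity from $\gcd(S)=1$. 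Your version avoids the consecutive-gap bookkeeping entirely (which in the paper is the most delicate and least transparent step), and your up-front observation that the hypothesis is exactly $\gcd(S)=1$ makes the role of the assumption clearer than in the original. Both proofs also establish simplicity of the positive root by different elementary means (Descartes' rule of signs versus $f'(\rho)=-\rho^k h'(\rho)>0$), and both are complete; yours additionally recovers the conclusion $f'(\rho)>0$ of Lemma \ref{l0} for free.
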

\begin{proof}
Let $f(x)=x^{k}-\alpha_1x^{k-1}-\cdots - \alpha_k$, where the $\alpha_i$ satisfy the given hypotheses.  By Descartes' rule of signs, the equation $f(x)=0$ has a single (simple) positive root, which we will denote by $\lambda$.  Let $\rho$ be any root of the equation $f(x)=0$ other than $\lambda$.  We will show that the numbers $\alpha_i \rho^{k-i}$, $1 \leq i \leq k$, cannot all be non-negative real numbers.  Suppose, to the contrary, that this is the case.  Let $\{i_1,i_2,\ldots,i_a\}$ denote the set of indices $i$ such that $\alpha_i \neq 0$.  Let $b=\min\{i_{j+1}-i_j: 1 \leq j \leq a-1\}$ and $\ell$ be an index such that $i_{\ell+1}-i_{\ell}=b$.  Then $$\alpha_{i_{\ell+1}}\rho^{i_{\ell+1}}=r\alpha_{i_\ell}\rho^{i_\ell}$$
for some $r>0$ implies $$\rho=\left(\frac{r\alpha_{i_\ell}}{\alpha_{i_{\ell+1}}}\right)^{1/b}\xi,$$
where $\xi$ denotes a primitive $b'$-th root of unity for some positive divisor $b'$ of $b$.  Note that $b'>1$ since $f(x)$ has only one positive real zero.  If $b'$ does not divide $k$, then $\rho^k$ is not a positive real since $\xi^k\neq 1$ in this case.
But this contradicts the equality $\rho^k=\alpha_1\rho^{k-1}+\cdots+\alpha_k$, since the right-hand side is a positive real.  Thus  $b'$ divides $k$ and so it must be the case that there exists some index $m$ such that the difference $c=i_{m+1}-i_m$ is not divisible by $b'$ (for otherwise, the second hypothesis concerning the $\alpha_i$ would be contradicted).  But then
$$\alpha_{i_{m+1}}\rho^{\alpha_{i_{m+1}}}=s\alpha_{i_m}\rho^{\alpha_{i_m}}$$
for some $s>0$ implies $\rho^c$ is a positive real number and hence $\xi^c=1$, which implies $b'$ divides $c$, a contradiction.

Thus, the $\alpha_i\rho^{k-i}$ cannot all be non-negative real numbers.  Suppose $i'$ is such that $\alpha_{i'}\rho^{k-i'}$ is either negative or not real.  Note that the assumption $\alpha_k>0$ implies $i'<k$.  Then we may write
\begin{align*}
|\rho|^k&=|\rho^k|=\left|\sum_{i=1}^k \alpha_i\rho^{k-i}\right|=\left|\alpha_k+\alpha_{i'}\rho^{k-i'}+\sum_{i=1,i\neq i'}^{k-1} \alpha_i\rho^{k-i}\right|\\
&\leq\left|\alpha_k+\alpha_{i'}\rho^{k-i'}\right|+\left|\sum_{i=1,i\neq i'}^{k-1} \alpha_i\rho^{k-i}\right|\leq\left|\alpha_k+\alpha_{i'}\rho^{k-i'}\right|+\sum_{i=1,i\neq i'}^{k-1}\alpha_i|\rho|^{k-i}\\
&<\alpha_k+\alpha_{i'}|\rho|^{k-i'}+\sum_{i=1,i\neq i'}^{k-1}\alpha_i|\rho|^{k-i}=\sum_{i=1}^k\alpha_i|\rho|^{k-i},
\end{align*}
where the last inequality is strict since $\alpha_{i'}\rho^{k-i'}$ is not a positive real number.  But then we have $|\rho|^k<\sum_{i=1}^k\alpha_i|\rho|^{k-i}$, which implies $f(|\rho|)<0$.  Since $f(x)>0$ if $x>\lambda$ and $f(x)<0$ if $0<x<\lambda$, it follows that $|\rho|<\lambda$, as desired.
\end{proof}

\emph{Remark:}  By Theorem \ref{th1}, for sequences $a_n$ defined by a recurrence of the form \eqref{eq0}, where the $\alpha_i$ satisfy the hypotheses of Lemma \ref{l2}, one needs only to verify the condition $f'(\lambda)>1$ in order to establish the log-concavity of $\sqrt[n]{a_n}$ for large $n$.

We now apply the previous results to the sequences $\sqrt[n]{f_{n}^{(k)}}$ and $\sqrt[n]{g_{n}^{(k)}}$ where $k \geq 2$.

\begin{theorem}\label{th2}
The characteristic polynomial $f(x)$ associated with either the sequence $f_n^{(k)}$ or $g_n^{(k)}$ has a dominant zero $\lambda$ such that $f'(\lambda)>1$.  Thus, for $k \geq 2$, the sequences $\sqrt[n]{f_{n}^{(k)}}$ and $\sqrt[n]{g_n^{(k)}}$ are log-concave for all $n\geq N$ for some constant $N$ depending on $k$.
\end{theorem}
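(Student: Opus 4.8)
The plan is to reduce both claims to Theorem \ref{th1}: once we know that the characteristic polynomial $f(x)$ has a dominant zero $\lambda$ with $f'(\lambda)>1$, the asserted log-concavity of $\sqrt[n]{f_n^{(k)}}$ and $\sqrt[n]{g_n^{(k)}}$ for all $n\ge N$ follows at once. Existence of a dominant zero is handled by Lemma \ref{l2}: in both families every $\alpha_i$ is $0$ or $1$, $\alpha_k=1$, and $\alpha_1=1$; since the index $1$ never belongs to a set of the form $\{d,2d,\ldots,k\}$ with $d>1$ a divisor of $k$, the second hypothesis of Lemma \ref{l2} holds, so $f(x)$ has a dominant zero $\lambda$. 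Moreover $\lambda$ is the unique positive root, and since $f(1)<0<f(2)$ in both cases (for $g_n^{(k)}$, $f(2)=2^{k-1}-1$; for $f_n^{(k)}$, $f(2)=1$), we get $1<\lambda<2$. The whole task is therefore the inequality $f'(\lambda)>1$, which I would prove separately for the two families.

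For $g_n^{(k)}$ we have $f(x)=x^k-x^{k-1}-1$, hence $f'(x)=x^{k-2}\bigl(kx-(k-1)\bigr)$. The key step is to use the characteristic equation in the form $\lambda^{k-1}(\lambda-1)=1$, i.e.\ $\lambda-1=\lambda^{1-k}$, to remove the top power of $\lambda$: writing $kx-(k-1)=k(x-1)+1$ and substituting gives, after simplification,
\[
f'(\lambda)=\frac{k}{\lambda}+\lambda^{k-2}.
\]
Since $1<\lambda<2$, we have $\frac{k}{\lambda}>\frac{k}{2}\ge 1$ for $k\ge 2$, while $\lambda^{k-2}>0$, so $f'(\lambda)>1$.

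For $f_n^{(k)}$ we have $f(x)=x^k-x^{k-1}-\cdots-x-1$. Here I would compute the coefficients of
\[
xf'(x)-kf(x)=k+(k-1)x+(k-2)x^2+\cdots+x^{k-1},
\]
and then evaluate at $x=\lambda$, using $f(\lambda)=0$, to obtain
\[
\lambda f'(\lambda)=\sum_{j=1}^{k} j\,\lambda^{k-j}.
\]
Because $\lambda>1$ and every exponent $k-j$ with $1\le j\le k$ is non-negative, each summand is at least $j$, so $\lambda f'(\lambda)\ge \frac{k(k+1)}{2}$; combined with $\lambda<2$ this yields $f'(\lambda)>\frac{k(k+1)}{4}\ge\frac{3}{2}>1$ for $k\ge 2$. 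With a dominant zero $\lambda$ and $f'(\lambda)>1$ now verified in both cases, Theorem \ref{th1} completes the proof.

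The only mildly delicate point is the $g_n^{(k)}$ case: there the dominant root $\lambda$ tends to $1$ as $k\to\infty$, so one cannot simply bound it away from $1$, and the substitution that collapses $f'(\lambda)$ to $\frac{k}{\lambda}+\lambda^{k-2}$ is precisely what makes the estimate transparent. Everything else is routine bookkeeping with the characteristic equation.
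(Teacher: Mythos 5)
Your proposal is correct and follows essentially the same route as the paper: invoke Lemma \ref{l2} to obtain the dominant zero, use the characteristic equation to rewrite $f'(\lambda)$ (arriving at $\frac{k}{\lambda}+\lambda^{k-2}+2\lambda^{k-3}+\cdots+(k-1)$ for $f_n^{(k)}$ and an equivalent form of $\lambda^{k-2}(1+k(\lambda-1))$ for $g_n^{(k)}$), conclude $f'(\lambda)>1$, and finish with Theorem \ref{th1}. The only cosmetic difference is that you additionally pin down $1<\lambda<2$ and organize the $f_n^{(k)}$ computation via $xf'(x)-kf(x)$, whereas the paper substitutes $\lambda^{k-1}=\lambda^{k-2}+\cdots+\frac{1}{\lambda}$ directly; the resulting identities are the same.
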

\begin{proof}
We need only to verify the first statement in each case.  Note that both $f_n^{(k)}$ and $g_n^{(k)}$ are defined by recurrences such that the constants $\alpha_i$ satisfy the conditions given in Lemma \ref{l2}. Thus, we need only to verify $f'(\lambda)>1$.  In the case of $f_n^{(k)}$, this follows easily since
\begin{align*}
f'(\lambda)&=k\lambda^{k-1}-(k-1)\lambda^{k-2}-\cdots-1=k\left(\lambda^{k-2}+\lambda^{k-3}+\cdots+\frac{1}{\lambda}\right)-(k-1)\lambda^{k-2}-\cdots-1\\
&=\frac{k}{\lambda}+\lambda^{k-2}+2\lambda^{k-3}+\cdots+(k-1)>1.
\end{align*}
In the case of $g_n^{(k)}$, note that $\lambda>1$ since $f(1)<0$.  Then $f'(\lambda)=\lambda^{k-2}(1+k(\lambda-1))>1$ since $\lambda>1$, which completes the proof.
\end{proof}

\section{Third and fourth order sequences}

In this section, we will determine the smallest possible $N$ in Theorem \ref{th1} in some particular cases.  The method illustrated here can be applied to other sequences in finding the smallest $N$.  Let us denote the $k=3$ cases of the sequences $f_n^{(k)}$ and $g_n^{(k)}$ by $t_n$ and $r_n$, respectively.  The $t_n$ and $r_n$ are known as the \emph{tribonacci} and $3$-\emph{bonacci} numbers, respectively.  See, e.g., \cite[Section 3.3]{BQ} and also the sequences A000073 and A000930 in \cite{Sl}.

We have the following estimates for the values of the $c_i$ and $\lambda_i$ in \eqref{eq2} in the cases of $t_n$ and $r_n$.

Values corresponding to the sequence $t_n$:
\begin{align*}
c_1&=0.182803,\quad
c_2=-0.091401+0.340546i \quad \text{and} \quad c_3=\overline{c_2},\\
\lambda_1&=1.839286, \quad
\lambda_2=-0.419643+0.606290i \quad \text{and} \quad
\lambda_3=\overline{\lambda_2}.\\
\end{align*}

Values corresponding to the sequence $r_n$:
\begin{align*}
c_1&=0.284693,\quad
c_2=-0.142346+0.305033i \quad \text{and} \quad c_3=\overline{c_2},\\
\lambda_1&=1.465571, \quad \lambda_2=-0.232785+0.792551i \quad \text{and} \quad
\lambda_3=\overline{\lambda_2}.\\
\end{align*}

We will make use of these estimates in the proof of the following result.

\begin{theorem}\label{th3}
The ratio $\frac{\sqrt[n]{a_{n}}}{\sqrt[n-1]{a_{n-1}}}$ is strictly decreasing for all $n \geq 4$ when $a_n=t_n$ and for all $n\geq 8$ when $a_n=r_n$.
\end{theorem}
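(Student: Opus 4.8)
The plan is to verify, for each $n$ in the relevant range, the inequality \eqref{th1e2} obtained in the proof of Theorem~\ref{th1}, namely
$$\frac{(1+e_n)^{2(n^2-1)}}{(1+e_{n-1})^{n(n+1)}(1+e_{n+1})^{n(n-1)}}>c_1^2,$$
which is equivalent to $\sqrt[n]{a_n}/\sqrt[n-1]{a_{n-1}}>\sqrt[n+1]{a_{n+1}}/\sqrt[n]{a_n}$; here $a_n=c_1\lambda_1^n(1+e_n)$ as in the proof of Lemma~\ref{l0}. I would split the range of $n$ into a finite initial segment $N\le n\le n_0-1$, handled by direct computation, and a tail $n\ge n_0$, handled by the explicit estimates for the $\lambda_i$ and $c_i$; here $n_0$ is a small explicit integer (one expects $n_0$ near $12$ to suffice). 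The cases $a_n=t_n$ and $a_n=r_n$ run in parallel using the respective tabulated values, which one should first certify to the displayed precision, say by locating the real root of the relevant characteristic polynomial between consecutive decimals and then using $c_1=1/f'(\lambda_1)$.

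For the tail, since $\lambda_3=\overline{\lambda_2}$ and $c_3=\overline{c_2}$ we have $e_n=2\operatorname{Re}(c_2\lambda_2^n)/(c_1\lambda_1^n)$, hence $|e_n|\le 2Mr^n$ with $M=|c_2|/c_1$ and $r=|\lambda_2|/\lambda_1<1$; from the tables one reads off $r<0.41$, $M<1.93$ for $t_n$, and $r<0.57$, $M<1.19$ for $r_n$. Because $a_n>0$ we have $1+e_n\ge 1-|e_n|$, and $1-|e_n|>0$ once $n\ge n_0$, so the left side of \eqref{th1e2} is at least
$$\frac{(1-|e_n|)^{2(n^2-1)}}{(1+|e_{n-1}|)^{n(n+1)}(1+|e_{n+1}|)^{n(n-1)}}.$$
Taking logarithms and applying the elementary bounds $\log(1-x)\ge -x/(1-x)$ and $\log(1+x)\le x$ together with $|e_j|\le 2Mr^j$, the logarithm of this quantity is bounded below by $-Cn^2r^n$ for an explicit constant $C$ of order $M$, provided $n_0$ is past the point where $|e_n|<\tfrac12$. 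Since $n^2r^n\to 0$ and is decreasing as soon as $n>2/|\log r|$ — a condition met throughout the range — it suffices to check $Cn_0^2r^{n_0}<\log(1/c_1^2)$ for the single value $n=n_0$, a short numerical verification that leaves a wide margin in both cases.

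For the initial segment I would compute $t_n$ (resp.\ $r_n$) from \eqref{eq0} up to $n_0$ and check directly the equivalent integer inequality $a_n^{2(n-1)(n+1)}>a_{n+1}^{n(n-1)}a_{n-1}^{n(n+1)}$ for $4\le n\le n_0-1$ in the tribonacci case and $8\le n\le n_0-1$ in the $3$-bonacci case. I would also verify that this inequality fails at $n=3$ for $t_n$ (where it reads $1>64$) and at $n=7$ for $r_n$, so that $N=4$ and $N=8$ are the smallest admissible thresholds; the limit statement is already supplied by Theorem~\ref{th1}. I expect the tail estimate to be the main obstacle: one must push the asymptotic bound down to a cutoff $n_0$ small enough that only a handful of cases remain, which forces a careful (though wholly elementary) use of the numerical data for the $\lambda_i$ and $c_i$ and of the monotonicity of $n^2r^n$, whereas the initial-segment checks are routine arithmetic with integers.
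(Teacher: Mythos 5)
Your proposal is correct and follows essentially the same route as the paper: verify the inequality \eqref{th1e2} directly for an initial segment, then handle the tail using $|e_n|\le 2Mr^n$ with the tabulated $\lambda_i,c_i$ and the elementary bounds on $\log(1\pm x)$. The only (harmless) difference is how the tail is closed — you reduce to a single check at $n_0$ via the monotonicity of $n^2r^n$, whereas the paper compares derivatives of auxiliary functions; both work, and your cutoffs and constants check out numerically.
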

\begin{proof}
We first consider the case $t_n$.  One can verify by direct computation that
$$\frac{\sqrt[n]{t_{n}}}{\sqrt[n-1]{t_{n-1}}}>\frac{\sqrt[n+1]{t_{n+1}}}{\sqrt[n]{t_{n}}}$$
for $4 \leq n \leq 9$, so we may assume $n \geq 10$.  By \eqref{th1e2}, it suffices to show
\begin{equation}\label{th3e1}
(1+e_n)^{2(n^2-1)}>c_1^{2/3}, \quad (1+e_{n-1})^{n(n+1)}<c_1^{-2/3} \quad \text{and} \quad (1+e_{n+1})^{n(n-1)}<c_1^{-2/3},
\end{equation}
for $n \geq 10$.

To do so, first note that
\begin{align*}
|e_n|&=\left|\frac{2\text{Re}(c_2\lambda_2^n)}{c_1\lambda_1^n}\right|\leq \frac{2|c_2|}{c_1}\left(\frac{|\lambda_2|}{\lambda_1}\right)^n=\frac{|\lambda_1-\lambda_2|}{|\text{Im}(\lambda_2)|}\left(\frac{|\lambda_2|}{\lambda_1}\right)^n< (3.86)(0.41)^{n}.
\end{align*}
Thus, to show \eqref{th3e1}, it is enough to show
\begin{equation}\label{th3e2}
(1-M_n)^{2(n^2-1)}>c_1^{2/3}, \quad (1+M_{n-1})^{n(n+1)}<c_1^{-2/3} \quad \text{and} \quad (1+M_{n+1})^{n(n-1)}<c_1^{-2/3},
\end{equation}
where $M_n=(3.86)(0.41)^{n}$.  Since $M_{n}$ is a decreasing positive sequence, we have $(1+M_{n-1})^{n(n+1)}>(1+M_{n+1})^{n(n-1)}$, so we only need to show the first two inequalities in \eqref{th3e2}.

The first inequality in \eqref{th3e2} holds if and only if $\log (1-M_n)>\frac{\log c_1}{3(n^2-1)}$. For this last inequality, we can show
\begin{equation}\label{th3e3}
M_n+M_n^2<-\frac{\log (0.19)}{3(n^2-1)}, \qquad n \geq 10,
\end{equation}
since $c_1<0.19$ and $-\log(1-y)<y+y^2$ for $0<y<\frac{1}{2}$.  To show \eqref{th3e3}, let $a(x)=-\frac{\log (0.19)}{3(x^2-1)}$ and $b(x)= M_x+M_x^2$, where $M_x$ has the obvious meaning.  Observe that $a(10)>b(10)$ and $\lim_{x \rightarrow \infty}(a(x)-b(x))=0$.  Thus to prove $a(x)>b(x)$ for $x \geq 10$, it suffices to show $a'(x)<b'(x)$ for $x \geq 10$.  Since $\frac{2}{3x^3}<\frac{2x}{3(x^2-1)^2}$, it is enough to show
$$\frac{(3.86)\log(0.41)}{\log(0.19)}(0.41)^{x}+\frac{2(3.86)^2\log(0.41)}{\log(0.19)}(0.41)^{2x}<\frac{2}{3x^3},$$
and for this, it is enough to show
\begin{equation}\label{th3e4}
\frac{\log(0.19)}{(3.86)\log(0.41)}(0.41)^{-x}>3x^3, \qquad x \geq 10.
\end{equation}
Note that \eqref{th3e4} holds for $x=10$, with the derivative of the difference of the two sides seen to be positive for all $x \geq 10$.  This finishes the proof of the first inequality in \eqref{th3e2}.

We proceed in a similar manner to verify the second inequality in \eqref{th3e2}. Since $\log(1+y)<y$ for $y>0$, it suffices to show $c(x)>d(x)$ for $x \geq 10$, where $c(x)=-\frac{2\log(0.19)}{3x(x+1)}$ and $d(x)=M_{x-1}$.  Since $c(10)>d(10)$ and $\lim_{x \rightarrow \infty}(c(x)-d(x))=0$, we only need to show that $c'(x)<d'(x)$ for $x \geq 10$.  Now $c'(x)<d'(x)$ if and only if
\begin{equation}\label{th3e5}
\frac{2(2x+1)}{3x^2(x+1)^2}>\frac{(3.86)\log(0.41)}{\log(0.19)}(0.41)^{x-1}, \qquad x \geq 10.
\end{equation}
Since
$$\frac{2(2x+1)}{3x^2(x+1)^2}>\frac{2(2x+1)}{3\left(x+\frac{1}{2}\right)^4}=\frac{4}{3\left(x+\frac{1}{2}\right)^3},$$
to prove \eqref{th3e5}, one can show
$$(0.41)^{1-x}>\frac{3}{4}(2.08)\left(x+\frac{1}{2}\right)^3, \qquad x \geq 10,$$
which can be done by comparing the derivatives of the two sides.  This establishes the second inequality in \eqref{th3e2} and completes the proof in the case when $a_n=t_n$.

A similar proof can be given when $a_n=r_n$, which we outline as follows.  We first verify by computation that
$$\frac{\sqrt[n]{r_{n}}}{\sqrt[n-1]{r_{n-1}}}>\frac{\sqrt[n+1]{r_{n+1}}}{\sqrt[n]{r_{n}}}$$
for $8 \leq n \leq 17$.  Thus, we may assume $n \geq 18$ in showing \eqref{th3e1} for $r_n$.  We use the bounding function of $M_n=(2.37)(0.57)^n$ in proving the first two inequalities in \eqref{th3e2}.   For the first inequality, instead of \eqref{th3e4}, one needs to show
$$\frac{\log(0.29)}{(2.37)\log(0.57)}(0.57)^{-x}>3x^3, \qquad x \geq 18,$$
which can be done by a comparison of the derivatives of the two sides.  In proving the second inequality in \eqref{th3e2} above for $r_n$, it is enough to verify
$$(0.57)^{1-x}>\frac{3}{4}(1.08)\left(x+\frac{1}{2}\right)^3, \qquad x \geq 18.$$
This can be done by comparing derivatives of the two sides for $x \geq 18$, which completes the proof in the $r_n$ case.
\end{proof}

By Theorems \ref{th1} and \ref{th3} and direct computation, we obtain the following.

\begin{corollary}\label{th3c1}
The sequence $\sqrt[n]{a_n}$ is strictly increasing for $n\geq 5$ when $a_n=t_n$ or $r_n$.
\end{corollary}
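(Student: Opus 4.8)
The plan is to read the corollary straight off Theorems~\ref{th1} and~\ref{th3}, the only mathematical content being that a strictly decreasing sequence converging to~$1$ stays strictly above~$1$. Write $\rho_n=\frac{\sqrt[n]{a_n}}{\sqrt[n-1]{a_{n-1}}}$. Since $t_n>0$ and $r_n>0$ for $n\geq2$, the statement that $\sqrt[n]{a_n}$ is strictly increasing for $n\geq5$ is equivalent to the statement that $\rho_n>1$ for every $n\geq6$, so this is what I would prove.

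First I would invoke Theorem~\ref{th3}: $\rho_n$ is strictly decreasing for $n\geq4$ when $a_n=t_n$ and for $n\geq8$ when $a_n=r_n$. Next, Theorem~\ref{th1} (whose hypotheses hold for $t_n$ and $r_n$ by Theorem~\ref{th2}) gives $\rho_n\to1$. A strictly decreasing real sequence with limit~$1$ exceeds~$1$ at each of its terms, so $\rho_n>1$ for all $n\geq4$ in the tribonacci case and for all $n\geq8$ in the $3$-bonacci case. This already settles the tribonacci case, since $\{n\geq6\}\subseteq\{n\geq4\}$; in fact it shows $\sqrt[n]{t_n}$ is strictly increasing from $n=3$ on, which is stronger than the stated bound.

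For $r_n$ it then remains only to cover $n=6$ and $n=7$, and here I would appeal to the same direct computation of $r_5,\dots,r_8$ already used in the proof of Theorem~\ref{th3}, namely that $\rho_6=3^{1/6}/2^{1/5}>1$ and $\rho_7=4^{1/7}/3^{1/6}>1$. Combined with $\rho_n>1$ for $n\geq8$, this yields $\rho_n>1$ for all $n\geq6$, which is the desired conclusion. One can alternatively phrase the qualitative part through Corollary~\ref{t1c1}, which already gives "eventually strictly increasing"; the only real content of the corollary is pinning the threshold down to~$5$.

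There is essentially no serious obstacle. The two points to be careful about are purely bookkeeping: the index shift between "$\rho_n>1$ from $n=N$ on" and "$\sqrt[n]{a_n}$ strictly increasing from $n=N-1$ on", and making sure the finitely many low-index ratios for $r_n$ are actually checked so that the bound $n\geq5$ — rather than the $n\geq7$ that Theorem~\ref{th3} supplies for free — is justified.
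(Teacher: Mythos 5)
Your proposal is correct and matches the paper's argument, which likewise derives the corollary from Theorems \ref{th1} and \ref{th3} (monotone decreasing ratio with limit $1$ forces the ratio to exceed $1$) together with a direct check of the finitely many small-index ratios for $r_n$. The index bookkeeping and the explicit values $\rho_6=3^{1/6}/2^{1/5}$ and $\rho_7=4^{1/7}/3^{1/6}$ are handled correctly.
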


Let $p_n$ and $q_n$ denote the respective $k=4$ cases of the $f_n^{(k)}$ and $g_n^{(k)}$.  The $p_n$ and $q_n$ are known as the \emph{tetranacci} and $4$-\emph{bonacci} numbers and occur, respectively, as sequences A000078 and A017898 in \cite{Sl}.  A proof comparable to the previous one yields the following result.

\begin{theorem}\label{th4}
The ratio $\frac{\sqrt[n]{a_{n}}}{\sqrt[n-1]{a_{n-1}}}$ is strictly decreasing for all $n \geq 5$ when $a_n=p_n$ and for all $n\geq 11$ when $a_n=q_n$.
\end{theorem}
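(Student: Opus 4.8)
The plan is to mimic precisely the two-part argument already carried out for $t_n$ and $r_n$ in the proof of Theorem \ref{th3}, now applied to the fourth-order sequences $p_n$ and $q_n$. First I would record numerical estimates for the roots $\lambda_1,\lambda_2,\lambda_3,\lambda_4$ of the characteristic polynomial $f(x)=x^4-x^3-x^2-x-1$ (for $p_n$) and $f(x)=x^4-x^3-1$ (for $q_n$), together with the associated coefficients $c_i=1/f'(\lambda_i)$ from \eqref{eq2}; here the dominant root $\lambda_1$ is real and positive with $f'(\lambda_1)>1$ by Theorem \ref{th2}, so Theorem \ref{th1} already guarantees eventual strict decrease, and the task is only to pin down the threshold. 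In each case the non-dominant roots will be one complex-conjugate pair and one further real root (of smaller modulus), so the error term $e_n$ from \eqref{l0e1} is a sum $e_n = \frac{2\operatorname{Re}(c_2\lambda_2^n)}{c_1\lambda_1^n} + \frac{c_4\lambda_4^n}{c_1\lambda_1^n}$, which I would bound above by $M_n := c\,r^n$ for explicit constants $c>0$ and $0<r<1$ obtained by combining $\tfrac{2|c_2|+|c_4|}{c_1}$ with $r = \max\{|\lambda_2|,|\lambda_4|\}/\lambda_1$, rounding up in the obvious way as in the $t_n$ argument.

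Next, having reduced to showing the three inequalities in \eqref{th3e1} — equivalently, by monotonicity of $M_n$, just the first two inequalities in \eqref{th3e2} with $c_1$ replaced by a convenient rational lower bound — I would carry out the same calculus device: for the lower bound on $(1-M_n)^{2(n^2-1)}$ use $-\log(1-y)<y+y^2$ and reduce to an inequality of the form $b_1 r^{-x} > 3x^3$ valid from the chosen starting index onward, verified at that index with the derivative of the difference checked to stay positive; for the upper bound on $(1+M_{n-1})^{n(n+1)}$ use $\log(1+y)<y$ and reduce, after the estimate $\tfrac{2(2x+1)}{3x^2(x+1)^2} > \tfrac{4}{3(x+\tfrac12)^3}$, to an inequality of the form $r^{1-x} > b_2 (x+\tfrac12)^3$, again checked at the starting index with derivatives compared. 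Finally, I would verify the finitely many small cases $5 \le n \le 10$ for $p_n$ and $11 \le n \le 17$ (or whatever the actual computation dictates — the specific bound $11$ for $q_n$ is to be confirmed) for $q_n$ by direct computation of $\frac{\sqrt[n]{a_n}}{\sqrt[n-1]{a_{n-1}}}$, which closes the gap between the combinatorial threshold $N$ in the statement and the value of $n$ from which the analytic estimates become effective.

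The main obstacle is purely bookkeeping rather than conceptual: the starting index from which the derivative-comparison inequalities hold depends sensitively on the constants $c$ and $r$, and for $q_n$ (with $\lambda_1 \approx 1.38$, so $r$ closer to $1$) the convergence of $M_n$ is slow, forcing the analytic argument to begin at a fairly large index and hence requiring more small cases to be checked by hand; one must choose the rational over- and under-estimates for $|c_i|$, $|\lambda_i|$, $\lambda_1$ and $c_1$ tightly enough that the resulting threshold matches the claimed $N=5$ and $N=11$, yet loosely enough to keep the inequalities cleanly verifiable. I would therefore outline the $q_n$ case as was done for $r_n$ — stating the replacement constants in $M_n$ and the two key inequalities $b_1 r^{-x} > 3x^3$ and $r^{1-x} > b_2(x+\tfrac12)^3$ with their starting indices — and leave the routine derivative checks to the reader, noting that the same computation applied to any given recurrence of the form \eqref{eq0} with a dominant root satisfying $f'(\lambda)>1$ yields the optimal $N$.
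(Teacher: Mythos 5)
Your proposal follows exactly the route the paper intends: the paper gives no details for Theorem \ref{th4}, stating only that ``a proof comparable to the previous one yields the following result,'' and your plan is a faithful elaboration of that comparable proof, correctly adapting the bound on $e_n$ to account for the extra real non-dominant root in the quartic case and reducing to the same two derivative-comparison inequalities from the proof of Theorem \ref{th3}, with the remaining small cases checked by direct computation. This is correct and essentially identical in approach to what the paper does.
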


Given the prior two results, one might wonder if one can find some bound for the best possible $N$ as a function of $k$.  In the case of $f_n^{(k)}$, such a bound seems possible in light of the fact (see \cite[Lemma 5.2]{MS}) that the dominant zero of the associated characteristic polynomial approaches $2$ as $k$ approaches infinity, with all other zeros of modulus strictly less than $1$ and distinct.  By the present method, one would need an estimate of the magnitude of the constants $c_i$ in \eqref{eq2}.  In particular, it would be useful to have a lower bound (as a function of $k$) for the quantity $$m(k):=\min_{2\leq i\leq k}\left|\prod_{j=1,j\neq i}^k(\lambda_i-\lambda_j)\right|.$$
If $m(k)$ can be shown, for example, to be no smaller than $ab^{-k}$ for some constants $a$ and $b$ with $b>\frac{1}{2}$, then a bound for $N$ in terms of $k$ could probably be obtained.

\end{document}